\titleformat{\subsection}[runin]
{\bfseries} {\thesubsection{.}}{0.15cm}{}[.]
\titleformat{\subsubsection}[runin]
{\em}{\thesubsubsection{.}}{0.15cm}{}[.]
\newtheorem{theorem}{Theorem}%[section]
\newtheorem{lemma}{Lemma}
\theoremstyle{definition}
\newtheorem{remark}{Remark}
\numberwithin{equation}{section}
\numberwithin{figure}{section}
\newcommand\Fscr{\mathscr{F}}
\newcommand\Gscr{\mathscr{G}}
\newcommand\B{\mathbb{B}}
\newcommand\C{\mathbb{C}}
\newcommand\D{\overline{\mathbb D}}
\renewcommand\D{\mathbb D}
\newcommand\N{\mathbb{N}}
\renewcommand\P{\mathbb{P}}
\renewcommand\b{\mathbb{B}}
\newcommand\cd{\overline{\mathbb D}}
\newcommand\igot{\mathfrak{i}}
\renewcommand\igot{\mathfrak{i}}
\renewcommand\imath{\igot}
\newcommand\Id{\mathrm{Id}}
\newcommand\Aut{\mathrm{Aut}}
\begin{document}

\fancyhead[LO]{A foliation of the ball by complete holomorphic discs}
\fancyhead[RE]{A.\ Alarc\'on and F.\ Forstneri\v c} 
\fancyhead[RO,LE]{\thepage}

\thispagestyle{empty}

%% Title
%\vspace*{5mm}
\begin{center}
{\bf\LARGE A foliation of the ball by complete holomorphic discs}

\vspace*{0.5cm}

%% Authors
{\large\bf  Antonio Alarc{\'o}n \; and \; Franc Forstneri{\v c}} 
\end{center}

%% Addresses and finantial support
%\footnote[0]{\vspace*{-0.4cm}
%}
%% Abstract, keywords, and MSC

\vspace*{0.5cm}

\begin{quote}
{\small
\noindent {\bf Abstract}\hspace*{0.1cm}
We show that the open unit ball $\B^n$ of $\C^n$ $(n>1)$ 
admits a nonsingular holomorphic foliation by complete properly embedded holomorphic discs.

\vspace*{0.2cm}

\noindent{\bf Keywords}\hspace*{0.1cm} Riemann surface, holomorphic disc, foliation, complete Riemannian manifold

\vspace*{0.1cm}

\noindent{\bf MSC (2010):}\hspace*{0.1cm} 32B15, 32H02, 32M17, 53C12}
%
%  53A10: Minimal surfaces, surfaces with prescribed mean curvature
%  53C12: Foliations (differential geometric aspects)
%  53C42: (1980-now) Immersions (minimal, prescribed curvature, tight, etc.)
%  32B15: (1973-now) Analytic subsets of affine space
%  32H05: Holomorphic mappings, embeddings,...
%
%  32E10: Stein manifolds
%  32E30: (1973-now) Holomorphic and polynomial approximation, Runge pairs, interpolation
%  32H02: (1991-now) Holomorphic mappings, (holomorphic) embeddings and related questions
%  32M17: Automorphism groups of Cn and affine manifolds 
%
\end{quote}

%%%%%%%%%%
%%%%%%%%%%
%%%%%%%%%%
%%%%%%%%%%
%%%%%%%%%%
%%%%%%%%%%

\vspace{1mm}

\section{Introduction} 
\label{sec:intro}

An open connected submanifold $M$ of a Euclidean space is said to be {\em complete} if 
every divergent path in $M$ has infinite Euclidean length; equivalently, the restriction of the 
Euclidean metric $ds^2$ to $M$ is a complete Riemannian metric on $M$. Recall that a path 
$\gamma:[0,1)\to M$ is called divergent if $\gamma(t)$ leaves any compact subset 
of $M$ as $t\to 1$.

For $n>1$, we denote by $\B^n$ the open unit ball of $\C^n$. In this paper, we prove the following result.

%
%	THEOREM 1
%
\begin{theorem}\label{th:th1}
For any integer $n>1$ there exists a nonsingular holomorphic foliation $\Fscr$ of $\B^n$ all of
whose leaves are complete properly embedded holomorphic discs in $\B^n$.
\end{theorem}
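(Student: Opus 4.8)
The plan is to start from the flat foliation of $\B^n$ and to deform it by a recursive scheme, pushing the leaves toward the sphere $b\B^n$ and inflating their intrinsic metrics while never destroying the foliation structure. Writing points of $\C^n=\C\times\C^{n-1}$ as $(z,w)$, the map
\[
F_0\colon \B^{n-1}\times\D\to\B^n,\qquad F_0(c,\zeta)=\bigl(\sqrt{1-|c|^2}\,\zeta,\;c\bigr),
\]
is a diffeomorphism (with inverse $(z,w)\mapsto\bigl(w,\,z/\sqrt{1-|w|^2}\bigr)$) which is holomorphic and injective on each slice $\{c\}\times\D$, and whose leaves $L^0_c=F_0(\{c\}\times\D)=\{w=c\}\cap\B^n$ are the fibers of the holomorphic submersion $(z,w)\mapsto w$. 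Thus $F_0$ realizes a nonsingular holomorphic foliation of $\B^n$ by properly embedded flat discs; these are proper and embedded but visibly incomplete. I would construct $\Fscr$ as a limit of a sequence of foliations of the same qualitative type, encoded by maps $F_j(c,\cdot)\colon\D\to\B^n$ with $F_0$ as above.

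Fix an exhaustion $0<\rho_1<\rho_2<\cdots\to1$ of $\D$ and a summable sequence $\epsilon_j>0$. The inductive step passes from $F_j$ to $F_{j+1}$ by a parametric Riemann–Hilbert modification, depending smoothly on the leaf label $c$ and concentrated near the circle $\{|\zeta|=\rho_j\}$, arranged so that, uniformly in $c\in\B^{n-1}$: (i) $F_{j+1}(c,\cdot)$ and $F_j(c,\cdot)$ differ by less than $\epsilon_j$ on $\{|\zeta|\le\rho_{j-1}\}$; (ii) the image $F_{j+1}(c,\{|\zeta|=\rho_j\})$ lies in the spherical shell $\{1-1/j<|(z,w)|<1\}$, pushing the boundary toward $b\B^n$; (iii) the intrinsic distance in $L^{j+1}_c$ from $F_{j+1}(c,0)$ to $F_{j+1}(c,\{|\zeta|=\rho_j\})$ exceeds $j$, inflating the leaf metric; and (iv) every $F_{j+1}(c,\cdot)$ remains an embedding, the slices stay pairwise disjoint, and their union is still all of $\B^n$. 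Condition (i) makes the inner annuli essentially frozen, so the length estimate (iii) persists to the limit.

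By (i) the maps $F_j(c,\cdot)$ converge uniformly on compacta of $\D$, locally uniformly in $c$, to a limit $F(c,\cdot)$ that is a holomorphic disc for each $c$; keeping the convergence $C^1$ in the transverse parameter guarantees that the limiting field of tangent lines is a holomorphic (hence integrable) line subbundle of $T\B^n$, so that $\Fscr=\{F(c,\D)\}_{c\in\B^{n-1}}$ is a genuine nonsingular holomorphic foliation. Then (ii) together with (iv) yields that each leaf is proper and that the leaves fill $\B^n$; (iii) shows that the intrinsic distance from the center to $\{|\zeta|=\rho_j\}$ tends to $\infty$, so every divergent path in a leaf crosses all these circles and hence has infinite Euclidean length, i.e. each leaf is complete; and embeddedness passes to the limit from (iv). This delivers the conclusion of Theorem~\ref{th:th1}.

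The hard part will be step (iv): a standard Riemann–Hilbert modification acts on one disc at a time and, through the attachment of boundary labels, moves the disc in directions that a priori cross neighboring leaves and can spoil both disjointness and the covering of $\B^n$. The crux is therefore to perform the deformation \emph{coherently} across the whole family, for instance by choosing the boundary data as a single function of $(c,\zeta)$ whose induced family map $F_{j+1}$ has nonvanishing transverse Jacobian, or by realizing the modification through a transversally controlled ambient isotopy of $\B^n$ that carries leaves to leaves. Reconciling this coherence and the surjectivity onto $\B^n$ with the competing quantitative demands (ii) and (iii), while keeping the convergence controlled so that the limit is a holomorphic foliation, is the technical heart of the argument.
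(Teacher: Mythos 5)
Your strategy---deform the flat foliation of $\B^n$ by a sequence of parametric Riemann--Hilbert modifications and pass to the limit---breaks down exactly where you say it does, and the difficulty you flag in step (iv) is not a technical loose end but the entire content of the theorem. The Riemann--Hilbert method modifies one holomorphic disc at a time by attaching boundary discs, and the resulting perturbation has no reason to respect a transverse family: the modified disc $F_{j+1}(c,\cdot)$ will in general cross the nearby leaves $L^{j}_{c'}$, and there is no known way to perform these modifications coherently so that the slices stay pairwise disjoint, their union stays equal to $\B^n$, and the dependence on $c$ stays holomorphic. This is precisely why the earlier Riemann--Hilbert constructions of complete proper curves in balls produce single curves rather than foliations. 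Two further points are unaddressed: (1) smooth or $C^1$ dependence on the transverse parameter is not enough --- a field of complex tangent lines that is holomorphic along each leaf but only $C^1$ in $c$ need not be a holomorphic line subbundle of $T\B^n$, so your limit would at best be a continuous family of holomorphic discs, not a holomorphic foliation; (2) uniform convergence on compacta of the maps $F_j(c,\cdot)$ does not prevent the union of the limit discs from being a proper subset of $\B^n$.

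The paper resolves the coherence problem by never touching individual leaves: it deforms the ambient space. One fixes a labyrinth $\Gamma=\bigcup_i\Gamma_i\subset\B^n$ of pairwise disjoint compact convex pieces such that any divergent path in $\B^n$ avoiding a tail $\bigcup_{k\ge i}\Gamma_k$ has infinite length, and uses the Forstneri\v c--Rosay approximation theorem (Lemma \ref{lem:starshaped}) to build automorphisms $\Phi_i\in\Aut(\C^n)$ pushing larger and larger cylinders $r_j\overline{\P}\times\C$ of the standard foliation of $\C^n$ by parallel lines off the pieces $\Gamma_j$, while converging on an increasing sequence of compacta. The limit is a biholomorphism $\Phi$ of a pseudoconvex Runge domain $\Omega\subset\C^n$ onto $\B^n$; the image of the vertical-line foliation of $\Omega$ is then automatically a nonsingular holomorphic foliation of $\B^n$ with disjoint leaves filling the ball (exactly what your step (iv) tries to arrange by hand), each leaf is a properly embedded disc because it is a bounded Runge open subset of a line, and completeness follows because each leaf misses a tail of the labyrinth. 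To salvage your approach you would have to realize each deformation step as a global biholomorphism carrying leaves to leaves---which is, in essence, the automorphism method.
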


Theorem \ref{th:th1} seems to be the first result in which one controls
the topology of all leaves in a nonsingular holomorphic foliation of the ball by
complete leaves; in our examples all leaves are the simplest possible ones, namely, discs. 
We do not know whether a comparable result holds with leaves that have prescribed nontrivial topology.

Before proceeding, we place our result in the context of what is known.

The question whether there exist bounded (relatively compact) complete 
complex submanifolds of $\C^n$ for $n>1$ was raised by P.\ Yang \cite{Yang1977} in 1977.
The first examples were found in 1979 by P.\ Jones \cite{Jones1979}
who showed that the disc $\D=\{z\in\C:|z|<1\}$ admits a complete bounded holomorphic immersion
into $\C^2$, embedding into $\C^3$, and proper embedding into the ball of $\C^4$.
Interest in this subject has recently been revived due to new construction methods.
It was shown that there are complete properly {\em immersed} holomorphic curves 
in $\B^2$, and embedded ones in $\B^3$, with any given topology \cite{AlarconLopez2013MA}, 
and also those with the complex structure of any given bordered Riemann surface 
\cite{AlarconForstneric2013MA,AlarconForstneric2014IM}. A related result in higher
dimension was obtained by Drinovec Drnov\v sek \cite{Drinovec2015JMAA}.
Parallel developments were made in minimal surface theory 
where the corresponding circle of questions is known as the {\em Calabi-Yau problem}; 
see the survey \cite{AlarconForstnericJAMS} and the preprint \cite{AlarconForstneric2019CY}.

It is a considerably more challenging task to construct complete properly {\em embedded} 
holomorphic curves in $\B^2$ and, more generally, complete complex hypersurfaces in $\B^n$ for $n>1$. 
The first examples for $n=2$ were given by A.\ Alarc\'on and F.J.\ L\'opez in \cite{AlarconLopez2016JEMS}. 

In a pair of landmark papers \cite{Globevnik2015AM,Globevnik2016MA} in 2015--16, 
J.\ Globevnik constructed for any pair of integers $1\le k<n$ a complete $k$-dimensional complex 
submanifold of $\B^n$ and, more generally, of any pseudoconvex Runge domain in $\C^n$. 
For $k=n-1$ his construction provides a possibly singular holomorphic foliation of the ball $\B^n$ 
by complete complex hypersurfaces, most of which are smooth. 
Subsequently, Alarc\'on \cite{Alarcon2018} introduced to this subject Forstneri\v c's techniques from 
\cite{Forstneric2003AM}, concerning noncritical holomorphic functions, and showed that every smooth complex
hypersurface in the ball $\B^n$ is a leaf of a {\em nonsingular holomorphic foliation} of $\B^n$ by hypersurfaces 
such that all except perhaps the initial one are complete. An analogous result was established for 
foliations of any codimension. This provides both a converse to, and an extension of the aforementioned 
theorem of Globevnik. 

Foliations in \cite{Alarcon2018,Globevnik2015AM,Globevnik2016MA}
are given by level sets of suitable holomorphic functions on $\b^n$ (or, more generally, 
of submersions $\b^n\to\C^q$ with $1\le q<n$), so the topology of their leaves is not controlled. 
The same can be said about the examples in \cite{AlarconLopez2016JEMS}.
By a different technique, using holomorphic automorphisms of $\C^n$, 
Alarc\'on, Globevnik, and L\'opez \cite{AlarconGlobevnikLopez2016Crelle} obtained complete closed 
complex hypersurfaces in the ball $\B^n$ $(n>1)$ with certain restrictions on the topology of the examples, 
and with any given topology when $n=2$ \cite{AlarconGlobevnik2017}. It follows in particular that the disc 
$\D$ can be embedded as a complete proper holomorphic curve in $\B^2$. However, their results do 
not apply to foliations.

Our proof of Theorem \ref{th:th1} follows a similar approach as the one in 
\cite{AlarconGlobevnikLopez2016Crelle}, but with an addition which enables
us to control the topology and completeness of all leaves in a foliation, and 
not only of a single curve. By using holomorphic automorphisms we 
successively twist a holomorphic foliation of $\C^n$ by complex lines in 
order to make bigger and bigger parts of the foliation avoid pieces of a suitable labyrinth 
$\Gamma$ in $\B^n$. The labyrinth is a closed set in $\B^n$ exhausted by an increasing sequence
$K_j=\bigcup_{i=1}^j \Gamma_i$ $(j\in\N)$ 
of compact polynomially convex sets, where $\Gamma_i$ is contained in a
spherical shell $B_{i+1}\setminus B_{i}$ between two concentric balls in $\B^n$ 
and consists of finitely many pairwise disjoint closed round balls in suitably 
chosen affine real hyperplanes. The main property of the labyrinth is that any divergent curve
in $\B^n$ which avoids the set $\bigcup_{k=i}^\infty \Gamma_k$ 
for some $i\in\N$ has infinite length. Such labyrinths have been constructed in 
\cite{AlarconGlobevnikLopez2016Crelle,Globevnik2015AM}.
Note that each connected component of the intersection of $\B^n$ with an embedded
complex line $L\subset \C^n$ is Runge in $L$ and hence simply connected;
since it is also bounded, it is a properly embedded disc in $\B^n$. 
Our construction therefore yields a sequence of foliations $\{\Fscr_i\}_{i\in\N}$ of $\B^n$ by discs
such that all leaves of $\Fscr_i$ intersecting a compact subset $B_i\subset \B^n$
have intrinsic diameter bigger than a certain number $k_i$, with $k_i\to+\infty$
and $B_i$ increasing to $\B^n$ as $i\to \infty$. 
In the limit foliation $\Fscr$, all leaves are discs with infinite intrinsic diameter, hence complete.

\section{The construction}\label{sec:proof}

Fix an integer $n>1$. Denote by $\Aut(\C^n)$ the group of holomorphic automorphisms of $\C^n$.
We shall need the following result concerning moving compact convex sets in $\C^n$
by holomorphic automorphisms; see \cite[Theorem 2.3]{ForstnericRosay1993}
or \cite[Corollary 4.12.4, p.\ 158]{Forstneric2017E} for more general statements.

%
%  MOVING CONVEX SETS
%
%
\begin{lemma}\label{lem:starshaped}      
Let $K_0,K_1,\ldots, K_m$ be pairwise disjoint compact convex sets in $\C^n$ and let 
$\Psi_j \in \Aut(\C^n)$ $(j=0,1,\ldots,m)$ be such that the images 
$K'_j =\Psi_j(K_j)$ are pairwise disjoint. If the sets $K=\bigcup_{j=0}^m K_j$ and 
$K'=\bigcup_{j=0}^m K'_j$ are polynomially convex, then for any 
$\delta>0$ there exists $\Psi\in \Aut(\C^n)$ such that
\begin{equation}\label{eq:Psi}
	|\Psi(z)-\Psi_j(z)| < \delta\quad \text{for all $z\in K_j$, \ $j=0,1,\ldots,m$.}
\end{equation}
\end{lemma}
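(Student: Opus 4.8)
The plan is to prove Lemma~\ref{lem:starshaped} by reducing the simultaneous approximation of the finitely many automorphisms $\Psi_0,\ldots,\Psi_m$ on the separate convex pieces to a single approximation problem for one holomorphic map defined on the disjoint union $K=\bigcup_{j=0}^m K_j$, and then invoking an Ander\'en--Lempert-type theorem to realize this map by a global automorphism of $\C^n$. First I would define a map $F\colon \Ocal \to \C^n$ on a neighborhood $\Ocal = \bigcup_{j=0}^m \Ocal_j$ of $K$, where $\Ocal_j$ is a small neighborhood of $K_j$ chosen so that the $\Ocal_j$ are pairwise disjoint, by setting $F|_{\Ocal_j} = \Psi_j|_{\Ocal_j}$. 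Since each $\Psi_j$ is a biholomorphism of $\C^n$, the map $F$ is holomorphic, injective on $\Ocal$ (after shrinking the neighborhoods so that the images $F(\Ocal_j)$ remain pairwise disjoint, which is possible because the $K'_j$ are pairwise disjoint and compact), and its differential is everywhere invertible; thus $F$ is an injective holomorphic immersion on a neighborhood of $K$, i.e.\ a biholomorphism onto its image $F(\Ocal)$.

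The crux is then to approximate this locally-defined biholomorphism $F$ uniformly on $K$ by a genuine automorphism $\Psi \in \Aut(\C^n)$. This is exactly the setting of the Ander\'en--Lempert theory as encapsulated in \cite[Theorem 2.3]{ForstnericRosay1993} (or \cite[Corollary 4.12.4]{Forstneric2017E}): if $K$ is a compact polynomially convex set and $F$ is a biholomorphic map from a neighborhood of $K$ onto a neighborhood of its image $K' = F(K)$, with $K'$ \emph{also} polynomially convex, and if $F$ can be connected to the identity through a smooth path of such injective holomorphic maps (an isotopy), then $F$ can be approximated uniformly on $K$ by holomorphic automorphisms of $\C^n$. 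The hypotheses of the lemma supply precisely the two polynomial convexity requirements: $K = \bigcup K_j$ and $K' = \bigcup K'_j$ are assumed polynomially convex. Applying this theorem to $F$ with the given $\delta$ yields $\Psi \in \Aut(\C^n)$ with $|\Psi(z) - F(z)| = |\Psi(z) - \Psi_j(z)| < \delta$ for $z \in K_j$, which is exactly \eqref{eq:Psi}.

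The step I expect to be the main obstacle is verifying the isotopy hypothesis, namely producing a smooth family of injective holomorphic maps on a neighborhood of $K$ joining $F$ to the identity. This is where the convexity of the individual pieces $K_j$ (as opposed to mere polynomial convexity of the whole) is essential. Each $\Psi_j$, being an automorphism of $\C^n$, is isotopic to the identity through automorphisms; however, one cannot simply run these isotopies independently on the separate pieces, because at intermediate times the images of the different pieces could collide, destroying injectivity of the combined map. The standard device to circumvent this is a \emph{scaling/shrinking} trick exploiting convexity: one first contracts each convex set $K_j$ toward an interior point via the linear isotopy $z \mapsto (1-t)c_j + t\,z$, shrinking each piece to an arbitrarily small ball around its center, so that the pieces and their images can be moved past one another without overlap, then transports the shrunken pieces by translations to their target positions, and finally expands. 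Because each $K_j$ is convex, these contractions keep the sets convex (hence polynomially convex) and disjoint throughout, furnishing the required global isotopy. I would therefore cite the referenced theorems as already incorporating this mechanism, rather than reconstruct it; the convexity assumption in the statement is exactly what makes the cited results directly applicable.
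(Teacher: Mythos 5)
Your proposal is correct and follows essentially the same route as the paper, which offers no proof of this lemma at all but simply cites \cite[Theorem 2.3]{ForstnericRosay1993} and \cite[Corollary 4.12.4]{Forstneric2017E}; your gluing of the $\Psi_j$ into a single biholomorphism near $K$ and the shrink--translate--expand isotopy exploiting convexity of the pieces is exactly the mechanism behind those cited results. The one point to keep an eye on is your parenthetical ``hence polynomially convex'': a finite union of pairwise disjoint compact convex sets need not be polynomially convex in general, which is precisely why the lemma assumes polynomial convexity of $K$ and $K'$ as separate hypotheses, and why the intermediate configurations of the isotopy require the argument built into the cited theorems rather than convexity alone.
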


The following lemma provides the induction step in the proof of Theorem \ref{th:th1}.

%
%   THE MAIN LEMMA
%
\begin{lemma}\label{lem:main}
Let $B$ be a compact convex set contained in the ball $\B^n\subset \C^n$,
and let $\Gamma=\bigcup_{j=1}^m \Gamma_j \subset \B^n\setminus B$ 
be a union of finitely many, pairwise disjoint, compact convex sets $\Gamma_j$
such that the set $B \cup \Gamma$ is polynomially convex.
If $\Phi\in\Aut(\C^n)$, then for any numbers $r>0$ and $\epsilon>0$ 
there exists $\Theta\in\Aut(\C^n)$ such that 
\begin{enumerate}[\rm (a)]
\item $\Theta(\Phi(r\cd\times \C^{n-1}))\cap \Gamma=\varnothing$, and 
\smallskip
\item $|\Theta(z)-z|<\epsilon$ for all $z\in B$.
\end{enumerate}
\end{lemma}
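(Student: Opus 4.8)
The plan is to reduce condition (a) to a statement about the inverse automorphism and then build that inverse with Lemma~\ref{lem:starshaped}. Write $S := \Phi(r\cd\times\C^{n-1}) = \Phi(\{z\in\C^n : |z_1|\le r\})$; this is a closed set, and since $\Phi$ is a (proper) homeomorphism of $\C^n$ and $\{|z_1|>r\}$ is unbounded, its complement $\C^n\setminus S = \Phi(\{|z_1|>r\})$ is open and unbounded. Setting $\Xi := \Theta^{-1}$, condition (a) reads $\Theta(S)\cap\Gamma=\varnothing$, which is equivalent to $\Xi(\Gamma)\cap S=\varnothing$, i.e.\ to $\Xi(\Gamma_j)\subset\C^n\setminus S$ for every $j$. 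Thus it suffices to produce $\Xi\in\Aut(\C^n)$ that pushes each $\Gamma_j$ into the open set $\C^n\setminus S$ while remaining close to the identity on $B$; the sought automorphism is then $\Theta=\Xi^{-1}$.

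To set up Lemma~\ref{lem:starshaped}, I would first enlarge $B$ to a compact convex set $B'\subset\B^n$ with $B\subset\mathrm{int}\,B'$ and $B'\cap\Gamma=\varnothing$ (a small closed neighborhood of $B$ is again convex and, for a small enough radius, still disjoint from $\Gamma$ and inside $\B^n$). Take $K_0=B'$, $\Psi_0=\Id$, and $K_j=\Gamma_j$ for $j=1,\dots,m$; these are pairwise disjoint compact convex sets. Next choose $m$ distinct points $p_1,\dots,p_m$ of large norm in the unbounded open set $\C^n\setminus S$, hence lying outside $B'$, and set $\Psi_j(z)=p_j+\lambda(z-c_j)$, where $c_j$ is the barycenter of $\Gamma_j$ and $\lambda>0$ is chosen so small that the convex images $K'_j:=\Psi_j(\Gamma_j)$ are pairwise disjoint, disjoint from $B'$, and contained in $\C^n\setminus S$. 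Each $\Psi_j$ is an affine automorphism of $\C^n$, so both $K=B'\cup\bigcup_j\Gamma_j$ and $K'=B'\cup\bigcup_j K'_j$ are finite disjoint unions of compact convex sets, and are therefore polynomially convex. Lemma~\ref{lem:starshaped} then yields, for any prescribed $\delta>0$, an automorphism $\Xi\in\Aut(\C^n)$ with $|\Xi(z)-\Psi_j(z)|<\delta$ for all $z\in K_j$ and all $j$.

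Finally I would fix $\delta$ small enough to close both conditions. As $\bigcup_j K'_j$ is compact and $S$ is closed and disjoint from it, the number $\dist\big(\bigcup_j K'_j,\,S\big)$ is positive; choosing $\delta$ below it forces $\Xi(\Gamma_j)\subset\C^n\setminus S$ for every $j$, which is exactly condition (a) for $\Theta=\Xi^{-1}$. For condition (b) one has $|\Xi(z)-z|<\delta$ on $B'$ (since $\Psi_0=\Id$); requiring in addition $\delta<\dist(B,\partial B')$ and $\delta<\epsilon$, a standard degree (Rouch\'e-type) argument on the homotopy $(1-t)z+t\,\Xi(z)$ over $B'$ shows that $\Xi^{-1}(B)\subset B'$ and hence $|\Theta(z)-z|=|\Xi^{-1}(z)-z|<\delta<\epsilon$ for all $z\in B$, giving (b).

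The main obstacle is conceptual rather than computational: because $S$ is unbounded one cannot control $\Theta$ directly on $S$, so the decisive step is to transfer (a) to the \emph{compact} sets $\Gamma_j$ via the inverse $\Xi=\Theta^{-1}$, and then to exploit that the target region $\C^n\setminus S$ is open and unbounded, hence roomy enough to receive disjoint convex copies of the $\Gamma_j$ that form a polynomially convex configuration together with $B'$. The only delicate point is the passage from closeness of $\Xi$ to the identity on $B'$ to closeness of $\Theta=\Xi^{-1}$ to the identity on the smaller set $B$, which is why I enlarge $B$ to $B'$ and invoke the degree argument indicated above.
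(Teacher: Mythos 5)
Your overall strategy coincides with the paper's: set $\Psi_0=\Id$ on a convex neighbourhood of $B$, send each $\Gamma_j$ by a dilation-plus-translation into the complement of $S=\Phi(r\cd\times\C^{n-1})$, glue these maps by Lemma~\ref{lem:starshaped}, and take $\Theta$ to be the inverse of the resulting automorphism; the passage from closeness to the identity on a neighbourhood of $B$ to closeness of the inverse on $B$ itself is also handled in the same way. There is, however, one genuine gap: you justify the polynomial convexity of both configurations $K=B'\cup\bigcup_j\Gamma_j$ and $K'=B'\cup\bigcup_j K'_j$ by asserting that a finite disjoint union of compact convex sets is automatically polynomially convex. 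This is false for three or more pieces: Kallin's classical example exhibits three pairwise disjoint closed polydiscs whose union is not polynomially convex (see Stout \cite{Stout2007}). Indeed, the hypothesis that $B\cup\Gamma$ is polynomially convex is part of the statement of the lemma precisely because it does not follow from disjointness and convexity of the pieces; your argument never uses this hypothesis, which is a sign that something is being assumed for free.

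Both polynomial convexity assertions are true in your setting, but each needs an argument. For $K$, one should invoke the hypothesis that $B\cup\Gamma$ is polynomially convex together with upper semicontinuity of polynomial hulls and the local maximum principle to see that sufficiently small compact convex neighbourhoods of the pieces still have polynomially convex union; this is how the paper chooses its sets $K_0,\ldots,K_m$ (and note that your $K$ enlarges only the piece $B$, which still requires this argument). For $K'$, the relevant fact is that adjoining to a polynomially convex compact set finitely many sufficiently small pairwise disjoint closed balls centred at points outside it preserves polynomial convexity (reduce to adjoining finitely many points and again use semicontinuity of hulls); the paper delegates exactly this point to \cite[proof of Corollary 4.12.4]{Forstneric2017E}. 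With these justifications inserted, your proof closes and is essentially the paper's proof.
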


\begin{proof}
Let $K_0$ be a compact convex neighbourhood of $B$, and for each $j=1,\ldots,m$ let $K_j$ be 
a compact convex neighbourhood of $\Gamma_j$ such that the sets $K_0,\ldots,K_m$ are pairwise disjoint 
and their union $K=\bigcup_{j=0}^m K_j$ is polynomially convex.
(We refer to Stout \cite{Stout2007} for general results on polynomial convexity.)

Let $\Psi_0=\Id\in\Aut(\C^n)$ be the identity automorphism. For $j=1,\ldots,m$ we choose automorphisms 
$\Psi_j\in \Aut(\C^n)$ such that the compact sets $K'_j:=\Psi_j(K_j)$ $(j=0,1,\ldots,m)$ 
are pairwise disjoint, we have that
\begin{equation}\label{eq:disjoint1}
	K'_j \cap \Phi(r\cd\times\C^{n-1})=\varnothing\quad \text{for $j=1,\ldots,m$},
\end{equation}
and the union $\bigcup_{j=0}^m K'_j$ is polynomially convex. 
Clearly, such $\Psi_j$ exist: noting that $K_0'=K_0$, we may squeeze each convex set $K_j$ $(j=1,\ldots,m)$ 
by a dilation into a very small neighbourhood of an interior point of itself and then translate 
their images into sufficiently small pairwise disjoint balls around some points in the complement of 
$K_0\cup \Phi(r\cd\times\C^{n-1})$. (We refer to \cite[proof of Corollary 4.12.4]{Forstneric2017E}
for the details.)  

Now, Lemma \ref{lem:starshaped} furnishes for every $\delta>0$ an
automorphism $\Psi\in\Aut(\C^n)$ satisfying \eqref{eq:Psi}. 
Let $\Theta=\Psi^{-1}$. If $\delta>0$ is small enough then condition (b) holds,
and we have $\Psi(\Gamma_j) \subset K'_j$ and hence $\Gamma_j\subset \Theta(K'_j)$ 
for every $j=1,\ldots,m$, which yields (a). Indeed, if $\Theta(\Phi(z))\in \Gamma_j$ 
for some $z\in r\cd\times \C^{n-1}$ then $\Phi(z)\in K'_j$ which contradicts \eqref{eq:disjoint1}. 
\end{proof}

%
% Proof of the Theorem
%
\begin{proof}[Proof of Theorem \ref{th:th1}]
Fix an integer $n>1$. We exhaust the unit ball $\B^n\subset\C^n$ by an increasing sequence of closed balls 
\begin{equation}\label{eq:cupB}
	B_1\subset B_2\subset \cdots\subset \bigcup_{i=1}^\infty B_i = \B^n
\end{equation}
centered at the origin such that each $B_i$ is contained in the interior of the next ball $B_{i+1}$. 
Denote by $\rho_i$ the radius of $B_i$, so we have 
$0<\rho_1<\rho_2<\cdots<1$ with $\lim_{i\to\infty}\rho_i=1$. 

In each spherical shell $\mathring B_{i+1}\setminus  B_i$ $(i\in\N)$ we choose a compact set 
$\Gamma_i= \bigcup_{j=1}^{m_i}\Gamma_{i,j}$ consisting of finitely many, pairwise disjoint, compact 
convex sets $\Gamma_{i,j}$ and satisfying the following conditions. 
\begin{enumerate}[\rm (A)]
\item The set $B_i\cup \Gamma_i$ is polynomially convex for every $i\in\N$.
\smallskip
\item Every divergent path in $\B^n$ avoiding  $\Gamma^i =\bigcup_{k=i}^\infty \Gamma_k$ 
for some $i\in\N$ has infinite length.
\end{enumerate}
Labyrinths with these properties have been constructed in 
\cite{AlarconGlobevnikLopez2016Crelle,Globevnik2015AM}. In \cite{AlarconGlobevnikLopez2016Crelle} 
the connected components $\Gamma_{i,j}$ of $\Gamma$ are balls in suitably chosen affine 
real hyperplanes in $\C^n$.

We now describe the induction leading to the proof of Theorem \ref{th:th1}.

Recall that $\D=\{z\in\C:|z|<1\}$. Let $\P=\D^{n-1}\subset\C^{n-1}$ denote the unit 
$(n-1)$-dimensional polydisc. By $r\P$ for $r>0$ we denote the polydisc 
with polyradius $r$. Choose a number $\epsilon_0>0$ and set $r_0=0$, $B_0=\Gamma_0=\varnothing$. 
Let $\Phi_0=\phi_0=\Id\in\Aut(\C^n)$ be the identity map. 

We shall inductively find sequences $r_i>0$, $\epsilon_i>0$, and 
$\phi_i\in\Aut(\C^n)$ such that, setting $\Phi_i=\phi_i\circ \cdots\circ \phi_1$, the following 
conditions hold for every $i\in\N$.
\begin{enumerate}[\rm (a{$_i$})]
\item $r_i>r_{i-1}+1$ and $B_i \subset \Phi_{i-1}(r_i \P\times \C)$.
\smallskip
\item $|\phi_i(z)-z|<\epsilon_i$ for all $z\in B_i$.
\smallskip
\item $\Phi_i(r_j \overline{\P} \times \C)\cap \Gamma_j=\varnothing$ for $j=1,\ldots,i$. 
\smallskip
\item $0<\epsilon_{i}<\min\{ \epsilon_{i-1}/2, \rho_{i+1} -  \rho_{i}\}$.
\smallskip
\item For every  holomorphic map $\theta:B_i\to\C^n$ satisfying $|\theta(z)-z|<\epsilon_i$ for all $z\in B_i$
we have that $\theta(\Phi_{i-1}(r_j \overline{\P} \times \C)) \cap \Gamma_j=\varnothing$ for $j=1,\ldots,i-1$.
\end{enumerate}
Assume inductively that for some $i\in\N$ we have already found these objects
for all indices up to $i-1$; this trivially holds for $i=1$. 

Choose a number $r_i$ satisfying (a$_i$). Next, choose $\epsilon_{i}>0$ so small that
(d$_i$) and (e$_i$) are satisfied. When $i=1$, condition (e$_1$) is vacuous, while for $i>1$ it can be satisfied
by (c$_{i-1}$); note that for $j<i$ the set $\Gamma_j$ is contained in the interior of $B_i$,  
and the set $\Phi_{i-1}(r_j \overline{\P} \times \C) \cap B_{i}$ is compact and disjoint from $\Gamma_j$.

By property {\rm (A)} of the labyrinth, we may apply Lemma \ref{lem:main} with $\Phi=\Phi_{i-1}$ 
and obtain an automorphism $\phi_i\in\Aut(\C^n)$ (called $\Theta$ in the lemma) satisfying the 
approximation condition (b$_i$) and such that the automorphism $\Phi_i:=\phi_i\circ\Phi_{i-1}\in\Aut(\C^n)$ 
satisfies (c$_i$). (The lemma directly ensures that $\Phi_i$ satisfies condition (c$_i$) for $j=i$; it then also 
satisfies the same condition for indices $j=1,\ldots,i-1$ in view of the condition (e$_i$) on the number $\epsilon_i$.)
Thus, the induction may proceed.

In view of \eqref{eq:cupB} and conditions (b$_i$) and (d$_i$), we see from 
\cite[Proposition 4.4.1]{Forstneric2017E}  that the sequence $\Phi_i\in\Aut(\C^n)$ converges uniformly 
on compacts in the domain 
\[
	\Omega = \bigcup_{i=1}^\infty \Phi_i^{-1}(B_i) \subset \C^n 
\]
to a biholomorphic map $\Phi=\lim_{i\to\infty} \Phi_i : \Omega\to \B^n$. Moreover,
$\Phi_i^{-1}(B_i)$ for $i=1,2,\ldots$ is an increasing sequence of compact polynomially convex sets 
exhausting $\Omega$. It follows that $\Omega$ is a pseudoconvex Runge domain in $\C^n$. 
Conditions (b$_i$) and (d$_i$) also show that for any $z \in B_i$ and $k>i$ we have 
\[
	|\Phi_k\circ\Phi_{i}^{-1}(z)-z| = |\phi_k\circ\cdots \circ\phi_{i+1}(z)-z|
	< \sum_{j=i+1}^k \epsilon_j < \epsilon_i.
\]
Passing to the limit as $k\to\infty$ gives 
\begin{equation}\label{eq:estPhi}
	|\Phi\circ\Phi_{i}^{-1}(z)-z| <  \epsilon_i, \quad z\in B_i.
\end{equation}
Writing $\Phi=(\Phi\circ\Phi_i^{-1}) \circ\Phi_i=\theta\circ\Phi_i$, we see from \eqref{eq:estPhi} and (e$_{i+1}$) that 
\begin{equation}\label{eq:avoiding}
	\Phi\bigl((r_i \overline{\P} \times \C) \cap\Omega\bigr) \cap \Gamma_i=\varnothing,\quad i=1,2,\ldots.
\end{equation}

Write $z=(z',z_n)\in\C^n$ where $z'=(z_1,\ldots,z_{n-1})$. Let $\Gscr$ be the foliation of $\Omega$ by 
the connected components of the sets $(\{z'=c\}\times \C)\cap \Omega$ for $c\in \C^{n-1}$, and let 
$\Fscr=\Phi(\Gscr)$ be the image foliation of $\B^n$. Since $\Omega$ is
pseudoconvex and Runge in $\C^n$, the leaves of $\Gscr$ are discs or complex lines
which are proper in $\Omega$; hence the analogous condition holds for the leaves of $\Fscr$
in $\B^n$. Since $\B^n$ is bounded, all leaves of $\Fscr$ (and hence of $\Gscr$) are discs.

It remains to show that all leaves of $\Fscr$ are complete. Let $F\in \Fscr$. 
Fix a point $w=(w',w_n)\in G:=\Phi^{-1}(F)$. Note that $G$ is a disc in the line 
$L=\{(w',\zeta): \zeta \in\C\}$. Choose $i_0\in\N$ so large that $w \in r_{i_0}\P\times \C$ and $\Phi(w)\in B_{i_0}$. 
Clearly these conditions persist if we increase $i_0$. Since $\Phi_i\to \Phi$ uniformly on compacts 
in $\Omega$ as $i\to\infty$, we can increase $i_0$ if necessary so that we also have 
$\Phi_{i_0-1}(w)\in B_{i_0}$. Since $L=\{w'\}\times \C \subset r_{i_0}\P\times \C$ by the choice of $i_0$,
we see from \eqref{eq:avoiding} that 
$F\cap\Gamma_i \subset \Phi(L\cap \Omega)\cap \Gamma_i =\varnothing$ for all $i\ge i_0$,
and hence $F\cap \Gamma^{i_0}=\varnothing$. Since $F$ is proper in $\b^n$ and in view of the 
property (B) of the labyrinth, it follows that $F$ is complete.
\end{proof}

%
%   REMARK ON RUNGE PSEUDOCONVEX DOMAINS
%
\begin{remark}\label{rem:Runge}
It might be possible to extend our proof to any pseudoconvex Runge domain
$\Omega\subset \C^n$ in place of the ball. The main problem is that we do not know how to construct
suitable labyrinths in such domains. One of the main requirements in our construction
is that the connected components of the labyrinth are convex or, more generally,
holomorphically contractible compact sets. In particular, we are unable to use labyrinths
constructed by embedding $\Omega$ as a closed complex submanifold $X\subset \C^N$ 
for a suitable $N$ and taking labyrinths in $\C^N$ intersected with $X$.
Labyrinths of this type were used by Globevnik in \cite{Globevnik2016MA}, but his construction 
is different from the one presented here and does not provide any control of the topology of the leaves. 
\end{remark}

%%%%%%%%%%
%%%%%%%%%%
%%%%%%%%%%
%%%%%%%%%%   THANKS
%%%%%%%%%%
%%%%%%%%%%

\subsection*{Acknowledgements}
A.\ Alarc\'on is supported by the State Research Agency (SRA) and European Regional Development Fund (ERDF) via the grant no. MTM2017-89677-P, MICINN, Spain.
F.\ Forstneri\v c is supported  by the research program P1-0291 and the research grant 
J1-9104 from ARRS, Republic of Slovenia. 

%%%%%%%%%%
%%%%%%%%%%
%%%%%%%%%%
%%%%%%%%%%   THE BIBLIOGRAPHY
%%%%%%%%%%
%%%%%%%%%%

%{\bibliographystyle{abbrv} \bibliography{references}}

%%%%%%%%%%
%%%%%%%%%%
%%%%%%%%%%
%%%%%%%%%%   AFFILIATIONS
%%%%%%%%%%
%%%%%%%%%%

\vspace*{0.5cm}
%\newpage 

\noindent Antonio Alarc\'{o}n

\noindent Departamento de Geometr\'{\i}a y Topolog\'{\i}a e Instituto de Matem\'aticas (IEMath-GR), Universidad de Granada, Campus de Fuentenueva s/n, E--18071 Granada, Spain

\noindent  e-mail: {\tt alarcon@ugr.es}

\vspace*{0.5cm}
\noindent Franc Forstneri\v c

\noindent Faculty of Mathematics and Physics, University of Ljubljana, Jadranska 19, SI--1000 Ljubljana, Slovenia

\noindent 
Institute of Mathematics, Physics and Mechanics, Jadranska 19, SI--1000 Ljubljana, Slovenia.

\noindent e-mail: {\tt franc.forstneric@fmf.uni-lj.si}

\end{document}